\newtheorem{theorem}{Theorem}[section]
\newtheorem{lemma}[theorem]{Lemma}
\newtheorem{proposition}[theorem]{Proposition}
\newtheorem{corollary}[theorem]{Corollary}
\theoremstyle{definition}
\newtheorem{definition}[theorem]{Definition}
\newtheorem{example}[theorem]{Example}
\newtheorem{note}[theorem]{Note}
\theoremstyle{remark}
\numberwithin{equation}{section}
      \def\@setcopyright{}
      \def\serieslogo@{}
\begin{document}

\title{A note on the $k$-defect number: \\
Vertex Coloring with a Fixed Number of Monochromatic Edges}

\author{$^\dag$E. Mphako-Banda, $^\ddag$C. Kriel and $^\star$A. Alochukwu }
\address{$^\dag$ $^\ddag$ School of Mathematics, University of the Witwatersrand, South Africa and \\ $^\star$Department of Mathematics, Computer Science and Physics, Albany State University, USA }

\email{$^\dag$Eunice.mphako-banda@wits.ac.za, $^\ddag$christo.kriel@wits.ac.za, $^\star$Alex.Alochukwu@asurams.edu}

\keywords{graph parameter, improper coloring, chromatic number,  $k$-defect polynomial}

\subjclass[2010]{ 05C15, 05C70}

\date{}

\maketitle

\begin{abstract}

In this paper, we introduce and study a novel graph parameter called the $k$-defect number, denoted $\phi_{k}(G)$, for a graph $G$ and an integer $0\leq k\leq |E(G)|$. Unlike traditional defective colorings that bound the local degree within monochromatic components, the $k$-defect number represents the smallest number of colors required to achieve a vertex coloring of $G$ having exactly \emph{$k$ monochromatic edges (also termed ``bad edges")}. This parameter generalizes the well-known chromatic number of a graph, $\chi (G)$, which is precisely $\phi _{0}(G)$. We establish fundamental properties of the $k$-defect number and derive bounds on $\phi _{k}(G)$ for specific graph classes, including trees, cycles, and wheels. Furthermore, we extend and generalize several classical properties of the chromatic number to this new edge-centric $k$-defect framework for values of $1\leq k\leq |E(G)|$. 
\end{abstract}

\section{Introduction}
Coloring problems in graphs are a classical yet vibrant area of research in discrete mathematics, with applications ranging from scheduling and frequency assignment to social and biological network analysis. The traditional chromatic number, $\chi(G)$, is a fundamental graph parameter defined as the minimum number of colors necessary to properly color the vertices of a graph $G$ such that no two adjacent vertices share the same color. This concept is extensively studied in graph theory and has broad applications, as detailed in (1). Associated with this is the chromatic polynomial, $\chi(G;\lambda)$, which counts the number of distinct proper vertex colorings of $G$ using $\lambda$ colors, with $\chi(G)$ being the smallest integer $\lambda$ for which $\chi(G;\lambda) \geq 0$. \\

Since the nineteenth-century four-color problem, results such as Brooks' theorem, bounds via the clique number $\omega(G)$, and the factorization of $\chi(G,\lambda)$, have become cornerstones of graph theory and its algorithmic and combinatorial applications \cite{GCPZ, TRB11}. While proper colorings demand that all adjacent vertices receive different colors, relaxations of this condition have given rise to various ``defective coloring" concepts in the literature, motivated by scenarios in which strict proper colorings are either unnecessary or infeasible. Notable examples, include
\begin{itemize}
\item \emph{Defective colorings}, where each colour class induces a subgraph of bounded maximum degree \cite{CB20,HM17};

\item \emph{Bounded-improper colorings}, which allow a bounded number of monochromatic adjacencies in each color class;

\item \emph{Fractional and list colorings} \cite{BP21}. 
\end{itemize}
Such models are particularly relevant in large-scale or noisy networks-such as wireless communication graphs or fault-tolerant distributed systems-where some ``colour conflicts" can be tolerated or even unavoidable.\\

A defective $(k,d)$-coloring refers to a $k$-coloring where each vertex has at most $d$ neighbors of the same color, focusing on local defectiveness within monochromatic components \cite{CWJ}. In contrast to these established notions, we introduce a new graph parameter, the $k$-defect number, \(\phi _{k}(G)\), which centers on the total number of monochromatic edges in a vertex coloring. \\

Our notion of a $k$-defect number differs from the ``vertex-coloring with defects" studied by Angelini et al. in \cite{ANG}, where each colour class is required to induce a subgraph of bounded maximum degree (a local defect bound). That is, their model fixes a local parameter $d$ and asks for the minimum number of colors so that in each color class the induced subgraph has maximum degree at most $d$ monochromatic edges. In contrast, we fix a global integer $k$ and seek the minimum number of colors so that exactly $k$ edges of $G$ are monochromatic.

A related but distinct direction concerns edge colorings. Casselgren and Petrosyan in \cite{INT} investigated improper interval edge colorings, where colors are assigned to edges so that at each vertex the set of incident edge colors forms an integer interval while allowing incident edges to share colors. Their notion of ``improper" concerns relaxed edge colorings with interval constraints.
Our proposed work, in contrast, develops a vertex-coloring analogue: we allow a prescribed number of edges to be monochromatic while counting the minimal number of colors required. \\

Specifically, for a given integer \(k\), \(\phi _{k}(G)\) is defined as the smallest number of colors needed to color the vertices of a graph \(G\) such that precisely \(k\) edges are monochromatic (i.e., connect vertices of the same color). An edge satisfying this condition is referred to as a "bad edge". This novel definition distinguishes our work by shifting the focus from local structural properties of monochromatic components to a global count of monochromatic edges. The $k$-defect number effectively generalizes the chromatic number, as \(\phi _{0}(G)\) is equivalent to \(\chi (G)\). The present work aims to explore this generalization systematically. \\

The structure of this paper is as follows: In Section \ref{Prel}, we outline some classic properties of the chromatic number, and a concise overview of the $k$-defect polynomial theory, essential for our analysis. 
This paper seeks to develop analogous results within the framework of the $k$-defect number, extending these insights for values of \(1\le k\le |E(G)|\). In Section \ref{PrelResults}, we formally define the $k$-defect number and present its fundamental properties. Section \ref{sec3} is dedicated to determining explicit $k$-defect numbers for specific graph classes, including trees, cycles, and wheels. In Section \ref{sec4}, we  establish some general results on the $k$-defect number, establishing relationships with other graph parameters and prove a necessary and sufficient condition for a graph to have all but one of the $k$-defect numbers equal to 2. We conclude in Section \ref{sec5} with open questions and avenues for future research.

\section{Preliminary results}\label{Prel}
In this section, we provide preliminary results on chromatic number of a graph and $k$-defect polynomials. We begin with the following Proposition that outlines some well known results on the chromatic number of a graph to which we will give analogous results in this study, followed by some facts on the $k$-defect polynomial of  a graph that are useful for this research. For proofs and further information, see \cite{HC69, EMB19}. \\

The  chromatic number, $\chi(G)$, is the least number of colors required  to color  a graph $G$  in such a way  that any pair of adjacent vertices  receives  different colors. The chromatic polynomial, $\chi(G;\lambda),$ is a function  which is associated with a graph $G$ and expresses the number of different  proper vertex colorings of  $G$  with $\lambda$ colors. The smallest integer $\lambda$  such that $\chi(G;\lambda) \geq 0$ is the chromatic number of the graph $G$.


\begin{proposition}
\label{Chrom}
Let $G$ be a graph and $\chi(G),$ the  chromatic number $G.$
\begin{enumerate}[(i)]
\item If $H$ is a subgraph of a graph $G$, then $\chi(H) \leq \chi(G).$
\item A graph $G$ of order $n$ has chromatic number 1 if and only if $G= \overline{K}_{n}$, the complement of a complete graph.
\item For every graph $G,$ $\chi(G) \geq \omega(G)$, where $\omega(G)$ is the clique number of $G$.
\item A non-empty  graph $G$ has chromatic number 2 if and only if  $G$ is bipartite.
 \end{enumerate}   
\end{proposition}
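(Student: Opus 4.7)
The proposition collects four classical facts about $\chi(G)$, so my plan is to handle each part with a short, self-contained argument using only the definitions of proper coloring, subgraph, clique number, and bipartiteness. No heavy machinery is needed, and the arguments for (i) and (iii) will feed directly into each other.

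For part (i), I will take any proper $\chi(G)$-coloring $c : V(G) \to \{1,\dots,\chi(G)\}$ and restrict it to $V(H)$. Because every edge of $H$ is also an edge of $G$, the restriction $c|_{V(H)}$ is automatically proper, giving $\chi(H) \le \chi(G)$. For part (ii), the forward direction is immediate: if $G = \overline{K_n}$ then $E(G) = \emptyset$ and assigning a single color to every vertex is proper. Conversely, if $\chi(G) = 1$ then all of $V(G)$ forms a single color class, which must be independent, so $E(G) = \emptyset$ and $G = \overline{K_n}$.

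For part (iii), my plan is to apply part (i) with $H$ taken to be a clique of size $\omega(G)$ in $G$, reducing the claim to the known identity $\chi(K_m) = m$; this identity holds because in $K_m$ every pair of vertices is adjacent and so must receive distinct colors, while $m$ colors clearly suffice. Combining, $\chi(G) \ge \chi(K_{\omega(G)}) = \omega(G)$. For part (iv), if $G$ is bipartite with parts $A, B$, coloring $A$ with $1$ and $B$ with $2$ gives a proper $2$-coloring, so $\chi(G) \le 2$; non-emptiness supplies an edge, forcing $\chi(G) \ge 2$. Conversely, any proper $2$-coloring partitions $V(G)$ into two independent color classes, which is precisely a bipartition.

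The main obstacle here is essentially cosmetic rather than mathematical: the four statements are textbook facts, so the work lies in presenting each direction cleanly and in being explicit about the small auxiliary claim $\chi(K_m) = m$ that drives part (iii). I expect the entire proof to be only a few lines per item.
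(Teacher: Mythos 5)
Your four arguments are the standard textbook ones and are all correct; the paper itself offers no proof of this proposition, simply deferring to the cited references, so there is no authorial approach to diverge from. The only point requiring a little care is in part (iv), where ``non-empty'' must be read as ``has at least one edge'' so that $\chi(G)\geq 2$ is forced --- which is precisely how you use it --- and your remaining steps (restriction of colorings for (i), the single independent color class for (ii), and $\chi(K_m)=m$ combined with (i) for (iii)) are complete as stated.
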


In an improper vertex coloring of a graph $G,$ we call  an edge   bad if it joins two vertices of the same color.  The $k$-defect polynomial, $\phi_{k}(G;\lambda),$ counts the number of ways of coloring $G$ using $\lambda$ colors such that it has $k$ bad edges. The $k$-defect polynomial,  $\phi_{k}(G;\lambda),$ can be computed using different methods just like the chromatic polynomial. The $0$-defect polynomial of a graph is just the chromatic polynomial. We refer the reader, to \cite{EMB19} for the basic theory of the $k$-defect polynomial.

The $k$-defect polynomial  of a graph
can be computed recursively as stated in  the following  proposition. 
\begin{proposition}
\label{theo333}
Let $G$ be a connected graph with  edge set $E$ and let $e \in E$. Then  
\begin{enumerate}[(a)]
\item $\phi_0(G;\lambda) = \chi(G;\lambda).$
\item for $1 \leq k < |E|$,
\[
\phi_k(G;\lambda) =
\begin{cases}
 \phi_{k-1}(G / e;\lambda), &  \text{if $e$ is a loop}\\
 \phi_{k-1}(G / e;\lambda) + (\lambda -1)\phi_{k}(G / e;\lambda),& \text{if $e$ is a bridge}\\
 \phi_k(G \backslash e ;\lambda) - \phi_k(G / e ;\lambda) + \phi_{k-1}(G / e;\lambda), &\text{otherwise.}
 \end{cases}\]
\item  for $k = |E|$,  $\phi_k(G;\lambda) = \lambda,$
\item  for $k > |E|$,  $\phi_k(G;\lambda) =0,$
\end{enumerate}
where $G\backslash e$ is the graph $G$ with edge $e$ deleted and $G/e$ is the graph after contraction of edge $e$.
 \end{proposition}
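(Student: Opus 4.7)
Part (a) is immediate from the definition: a $0$-defect coloring is one with no monochromatic (bad) edges, i.e.\ a proper coloring, so the generating polynomial coincides with $\chi(G;\lambda)$. Part (c) follows from connectedness: a coloring has all $|E|$ edges bad iff every edge is monochromatic iff (since $G$ is connected) every vertex receives the same color, which yields exactly $\lambda$ such colorings. Part (d) is immediate since one cannot have strictly more bad edges than edges.

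The substance is the recursion in (b). The plan is to fix an edge $e = uv$ and partition the $\lambda$-colorings of $G$ with exactly $k$ bad edges according to whether $e$ itself is bad. Colorings in which $e$ is bad are in bijection with $\lambda$-colorings of $G/e$ having exactly $k-1$ bad edges: contracting $e$ merges $u$ and $v$ (so their common color is forced) and drops the bad edge $e$ from the count. Colorings in which $e$ is not bad are in bijection with $\lambda$-colorings of $G\setminus e$ that have exactly $k$ bad edges \emph{and} assign different colors to $u$ and $v$. This immediately gives
\begin{equation*}
\phi_k(G;\lambda) \;=\; \phi_{k-1}(G/e;\lambda) \;+\; N_k(G\setminus e;u,v;\lambda),
\end{equation*}
where $N_k(G\setminus e;u,v;\lambda)$ counts colorings of $G\setminus e$ with $k$ bad edges in which $u,v$ receive different colors.

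For the generic case (where $e$ is neither a loop nor a bridge) I would evaluate $N_k$ by inclusion--exclusion: subtract from $\phi_k(G\setminus e;\lambda)$ those colorings of $G\setminus e$ with $k$ bad edges in which $u$ and $v$ share a color. Such colorings are in bijection with $\lambda$-colorings of $G/e$ having $k$ bad edges (identifying $u$ and $v$ does not affect the edge-set of $G\setminus e$, so the bad-edge count is preserved). This gives $N_k = \phi_k(G\setminus e;\lambda) - \phi_k(G/e;\lambda)$ and hence the third branch. The loop case is the simplest: a loop at $v$ is bad under every coloring, so colorings of $G$ with $k$ bad edges correspond bijectively to colorings of $G-e = G/e$ with $k-1$ bad edges, yielding the first branch.

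The main technical step is the bridge case. If $e$ is a bridge, then $G\setminus e$ splits into components $G_1 \ni u$ and $G_2 \ni v$, and I would establish the key identity
\begin{equation*}
\phi_k(G\setminus e;\lambda) \;=\; \lambda\,\phi_k(G/e;\lambda),
\end{equation*}
after which substitution into the generic identity collapses $\phi_k(G\setminus e;\lambda) - \phi_k(G/e;\lambda)$ to $(\lambda-1)\phi_k(G/e;\lambda)$, giving the stated bridge recursion. To prove the identity, let $f_j(H,x,c)$ denote the number of colorings of a graph $H$ having $j$ bad edges in which vertex $x$ receives color $c$. Permuting colors shows $f_j(H,x,c)$ is independent of $c$, so $f_j(H,x,c) = \phi_j(H;\lambda)/\lambda$. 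Writing the joint generating count for the two components and summing separately over (i) all pairs of colors at $u,v$ (for $G\setminus e$) and (ii) the single common color at the contracted vertex (for $G/e$), the identity drops out by direct convolution. This averaging/symmetry step is the one I expect to need the most care to present cleanly; everything else is bookkeeping on the bijections above.
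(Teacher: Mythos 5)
The paper states this proposition without proof, citing \cite{EMB19} for the underlying theory, so there is no in-paper argument to compare against. Your proposal is a correct and complete self-contained proof: the partition of colorings according to whether $e$ is bad, the identification of the edge sets of $G\setminus e$ and $G/e$ in the inclusion--exclusion step, and the color-symmetry argument giving $\phi_k(G\setminus e;\lambda)=\lambda\,\phi_k(G/e;\lambda)$ for a bridge (so that the generic branch collapses to the $(\lambda-1)\phi_k(G/e;\lambda)$ form) are all sound, and the loop and boundary cases (a), (c), (d) are handled correctly. This is the standard deletion--contraction derivation one would expect for this recursion; no gaps.
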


The rank,  $r(G)$, of a graph $G$, is defined as the number of vertices  minus  the number of components  of $G.$ A closed set $X$ of size $k,$ is the largest rank-$r$ subset of $E(G)$ containing $X.$  We denote the set of all closed sets  of $G$  by  $L(G).$ 

\begin{theorem}
\label{prop2}
Let $G$ be a graph. Then  $$\phi_{k}(G;\lambda) = \sum_{X \in L(G),\vert X \vert = k} \chi (G/X; \lambda)$$
if $G$ has at least one closed set  of size $k$. Otherwise $\phi_{k}(G;\lambda) = 0.$
\end{theorem}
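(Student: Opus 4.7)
The plan is to partition the colorings counted by $\phi_k(G;\lambda)$ according to their set of bad edges, and to show that the admissible bad-edge sets of size $k$ are exactly the closed sets of size $k$, each contributing $\chi(G/X;\lambda)$.

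\textbf{Step 1: Bad-edge sets are closed.} Given any $\lambda$-coloring $c$ of $G$, let $B_c\subseteq E(G)$ be the set of bad edges of $c$. I would first argue that $B_c\in L(G)$. Within any connected component of the spanning subgraph $(V(G),B_c)$, all vertices carry the same color, because two vertices joined by a path of monochromatic edges must share a color. Consequently, if an edge $e\notin B_c$ had both endpoints in the same component of $(V(G),B_c)$, its endpoints would be equicolored and $e$ would be bad, contradicting $e\notin B_c$. In matroid language, every $e\notin B_c$ strictly increases the rank of $B_c$, which is precisely the condition that $B_c$ is closed.

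\textbf{Step 2: Counting colorings with a prescribed closed bad-edge set.} Conversely, fix $X\in L(G)$ with $|X|=k$, and count the colorings $c$ of $G$ for which $B_c=X$ exactly. Any such coloring must assign the same color to the two endpoints of every edge of $X$, so it factors uniquely through the contraction $G/X$: each connected component of $(V(G),X)$ becomes a vertex of $G/X$ and is assigned a single color. The additional demand $B_c=X$ (not merely $B_c\supseteq X$) requires the induced coloring of $G/X$ to assign distinct colors to the endpoints of every non-loop edge of $G/X$. Because $X$ is closed, no edge outside $X$ has both endpoints in the same component of $X$, so $G/X$ has no loops, and the requirement becomes exactly that the induced coloring of $G/X$ is proper. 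The number of such $c$ is therefore $\chi(G/X;\lambda)$.

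\textbf{Step 3: Assembly.} Summing Step 2 over all closed sets of size $k$ gives
$$\phi_k(G;\lambda)=\sum_{X\in L(G),\,|X|=k}\chi(G/X;\lambda).$$
If $G$ has no closed set of size $k$, then Step 1 shows that no coloring can have bad-edge set of cardinality $k$, hence $\phi_k(G;\lambda)=0$.

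The main obstacle is the careful bookkeeping in Steps 1 and 2: one has to distinguish ``$B_c\supseteq X$'' from ``$B_c=X$'' and invoke the flatness of $X$ in precisely the right place to guarantee that $G/X$ is loopless, so that $\chi(G/X;\lambda)$ actually counts the desired colorings. Once this correspondence is established, the theorem follows by straightforward summation.
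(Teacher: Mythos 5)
Your proof is correct. The paper states this theorem without proof (it is a preliminary result imported from the literature on $k$-defect polynomials), and your argument — partitioning the $\lambda$-colorings by their bad-edge set, showing that every bad-edge set is a flat of the cycle matroid, and counting the colorings with prescribed bad-edge set $X$ as proper colorings of the loopless contraction $G/X$ — is exactly the standard derivation of this identity; in particular you correctly isolate the role of closedness in guaranteeing that $G/X$ has no loops, which is the one place a careless version of this argument goes wrong.
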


\begin{proposition}\label{pemb19}
    Let $T_n$ and $C_n$ be a tree and a cycle graph of order $n$, respectively. Then the $k$-defect polynomial for $0 \leq k \leq |E|$ is, 
    \begin{enumerate}
        \item[(i)]  $\phi_k(T_n;\lambda)={{n-1}\choose {k}} \lambda(\lambda-1)^{n-1-k}.$
        \item[(ii)] $\phi_k(C_n;\lambda)={{n}\choose {k}}\left[(\lambda-1)^{n-k}+(-1)^{n-k}(\lambda-1)\right].$
    \end{enumerate}
\end{proposition}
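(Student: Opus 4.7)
My plan is to apply Theorem \ref{prop2} to both parts, reducing the computation of $\phi_k(G;\lambda)$ to a sum of chromatic polynomials of contractions $G/X$ over all closed sets $X$ of size $k$ in the graphic matroid of $G$. The main work then consists of two steps: (a) identifying the flats of each size, and (b) computing the chromatic polynomial of the resulting contraction.

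For part (i), the graphic matroid of $T_n$ is free: every subset of the $n-1$ edges is independent since $T_n$ contains no circuit, and every subset is also closed, because adding any additional edge to a forest in $T_n$ strictly increases its rank. Hence the flats of size $k$ are precisely the $\binom{n-1}{k}$ subsets of $E(T_n)$ of that size. Since contracting any set of edges in a tree yields a tree on $n-k$ vertices, and the chromatic polynomial of any tree on $m$ vertices is $\lambda(\lambda-1)^{m-1}$ by a standard deletion-contraction argument, summing over all flats gives the stated formula.

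For part (ii), the graphic matroid of $C_n$ has a unique circuit, namely the full edge set. I would split into three cases according to $k$. For $0\le k\le n-2$, every $k$-subset $X$ is independent, and adding any $e\notin X$ still yields an independent set of size $k+1<n$, so $X$ is closed; there are $\binom{n}{k}$ such flats. Here the key geometric fact is that $C_n/X$ is itself a cycle on $n-k$ vertices (with two parallel edges when $n-k=2$), so $\chi(C_n/X;\lambda)=(\lambda-1)^{n-k}+(-1)^{n-k}(\lambda-1)$. For $k=n-1$, every $(n-1)$-subset has the same rank $n-1$ as $E(C_n)$ and therefore is not closed; the sum is empty, yielding $\phi_{n-1}(C_n;\lambda)=0$, which agrees with the formula since the bracketed factor vanishes when $n-k=1$. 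For $k=n$, Proposition \ref{theo333}(c) gives $\phi_n(C_n;\lambda)=\lambda$, which matches $\binom{n}{n}[1+(\lambda-1)]=\lambda$.

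The main obstacle is verifying that $C_n/X$ is a cycle on $n-k$ vertices for \emph{every} choice of $k$-subset $X$, not just the ``consecutive'' ones. I would handle this by a short induction on $k$: contracting any single non-loop edge of a cycle produces a cycle of length one less, and iterating this one edge at a time through the elements of $X$ reduces $C_n$ to a cycle (or parallel-edge multigraph) on $n-k$ vertices, regardless of the order in which the edges of $X$ are processed. Once this structural lemma is in place, the chromatic polynomial input from the cycle formula finishes the computation.
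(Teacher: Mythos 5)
Your proposal is correct and complete. The paper itself states this proposition without proof, citing \cite{EMB19}, so there is no in-text argument to compare against; your derivation via Theorem \ref{prop2} --- identifying the flats of the free matroid of $T_n$ (all $\binom{n-1}{k}$ subsets of size $k$) and of $C_n$ (all $k$-subsets for $k\le n-2$, none for $k=n-1$, and $E$ itself for $k=n$), then summing the chromatic polynomials of the contracted trees and cycles --- is sound and self-contained. You correctly handle the two edge cases that could trip this up: the $n-k=2$ contraction yielding parallel edges (which do not change the chromatic polynomial) and the vanishing of the bracketed factor at $k=n-1$, which makes the closed formula agree with the empty sum.
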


\section{Main Results: The $k$-defect number}\label{PrelResults}
In this section we introduce the $k$-defect number and present some basic results on this parameter.
\begin{definition}
For a coloring $c:V(G)\rightarrow \{1,\ldots,\ell\}$, a vertex $u \in V(G)$ and a color~$t$, $1\leq
t\leq \ell$, we denote by $c(u)$ the color of vertex $u$. That is, $c(u) = t$ means that $t$ is the color assigned to vertex $u$.
\end{definition}

\begin{definition}
Let $G$ be a  vertex-colored graph. An edge $uv \in E(G)$ is called bad if $c(u) = c(v)$.  
\end{definition}

\begin{definition}
The $k$-defect number, denoted $\phi_k(G)$ of graph $G$ is the smallest number of colors needed to color a graph $G$ with $k$ bad edges. Otherwise, if it is not possible  to color a graph $G$  with $k$ bad edges, then $\phi_k(G)=0.$  Thus, just like the chromatic number and the chromatic polynomial, the $k$-defect number, $\phi_k(G),$ is the smallest integer that gives  a positive value of  the $k$-defect polynomial, $\phi_{k}(G;\lambda).$ 
\end{definition}

We follow Proposition~\ref{theo333}  and  note that the $k$-defect number of a graph  is only defined  for $k$  such that  the $k$-defect polynomial is  greater than 0. In other words, the $k$-defect number is defined if it is possible to color a graph $G$  with $k$ bad edges.

 We state the following  basic properties of the $k$-defect number without proof.
\begin{proposition} Let $G$ be a graph with $m$ edges, then $\phi_m(G)=1.$ 
\end{proposition}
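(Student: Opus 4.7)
The plan is to verify the statement by exhibiting an explicit coloring that achieves exactly $m$ bad edges with a single color, and then arguing that $1$ is best possible. First I would assign every vertex of $G$ the same color, say color $1$, via the constant map $c: V(G) \to \{1\}$. Under this coloring, every edge $uv \in E(G)$ satisfies $c(u) = c(v)$, hence every edge is bad, giving exactly $m$ bad edges. This shows that a $1$-coloring realizes the target defect $k = m$, so $\phi_m(G) \leq 1$.

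For the lower bound, I would note that any vertex coloring uses at least one color (assuming $V(G) \neq \emptyset$), so $\phi_m(G) \geq 1$. Combining the two bounds yields $\phi_m(G) = 1$.

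Alternatively, one can invoke Proposition~\ref{theo333}(c), which asserts $\phi_m(G; \lambda) = \lambda$ when $k = |E(G)|$. By the characterization recalled in the definition of $\phi_k(G)$ as the smallest $\lambda$ with $\phi_k(G;\lambda) > 0$, the smallest integer $\lambda$ for which $\lambda > 0$ is $\lambda = 1$, whence $\phi_m(G) = 1$. I do not anticipate any obstacle here; the only subtlety is to remember that the definition excludes the vacuous $0$-color case, which is why the bound $\phi_m(G) \geq 1$ holds.
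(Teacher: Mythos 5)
Your proof is correct; the paper states this proposition without proof precisely because the argument is the one you give (the constant coloring makes all $m$ edges bad, and at least one color is always needed), and your alternative route via Proposition~\ref{theo333}(c) with $\phi_m(G;\lambda)=\lambda$ is equally consistent with the paper's definition of $\phi_k(G)$ as the least $\lambda$ making the $k$-defect polynomial positive.
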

\begin{proposition} Let $H$ be subgraph of $G.$  Then $\phi_k(H)\leq \phi_k(G)$  for  $0\leq k\leq |E(G)|.$
\end{proposition}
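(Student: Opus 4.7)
The plan is to mirror the standard proof of the chromatic-number version in Proposition~\ref{Chrom}(i): start with a coloring of $G$ that realizes $\phi_k(G)$ and restrict it to $V(H)$. Let $\ell = \phi_k(G)$ and fix $c\colon V(G)\to\{1,\ldots,\ell\}$ producing exactly $k$ bad edges in $G$. Because $V(H)\subseteq V(G)$, the restriction $c|_{V(H)}$ colors $H$ with at most $\ell$ colors, so the goal reduces to showing that a coloring of $H$ using at most $\ell$ colors and having exactly $k$ bad edges can be obtained.

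Next I would account for bad edges. Since $E(H)\subseteq E(G)$, each edge of $H$ that is bad under $c|_{V(H)}$ is also bad in $G$ under $c$, so the count of bad edges of $H$ is some $j\leq k$. If $j=k$, the restriction directly witnesses $\phi_k(H)\leq \ell = \phi_k(G)$ and the argument closes.

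The case $j<k$ is the main obstacle. Here one must promote some currently good edges of $H$ to bad edges without enlarging the palette. The natural device is to select a good edge $uv\in E(H)$ and equate the colors of its endpoints, say by setting $c(u):=c(v)$; this makes $uv$ bad while no new color appears. The subtle point is controlling the side effects: recoloring $u$ may flip the status of other edges incident to $u$ in $H$, so a careful iterative choice of which vertex to recolor---and which target color to reuse---is required to drive the bad-edge count to exactly $k-j$. Once this local adjustment is verified to succeed, the resulting coloring certifies $\phi_k(H)\leq \phi_k(G)$. Implicit throughout is the assumption that both $\phi_k(H)$ and $\phi_k(G)$ are positive, i.e.\ that $k$ bad edges are realisable in each graph; otherwise the inequality must be interpreted with the convention $\phi_k=0$ acting as an absent value, since a direct restriction argument cannot manufacture bad edges that $G$ itself cannot support.
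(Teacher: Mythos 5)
The paper states this proposition without proof, so the only thing to assess is whether your argument holds up. The restriction step and the case $j=k$ are fine, but the entire burden of the proof rests on the sentence ``once this local adjustment is verified to succeed,'' and that adjustment cannot in general be made to succeed: recoloring an endpoint of a good edge changes the bad-edge count by an amount governed by \emph{all} of its neighbours, and with only $\ell$ colors available the set of bad-edge counts realisable in $H$ need not contain $k$ even though it contains values below and above $k$. This is not a repairable gap, because the statement itself is false. Take $H=C_5$ with vertices $v_1,\dots,v_5$ and let $G$ be $C_5$ together with the chord $v_1v_3$. Under any $2$-coloring of a cycle the number of good (bichromatic) edges equals the number of color changes around the cycle, which is even, so $C_5$ has an odd number of bad edges under every $2$-coloring; hence $\phi_2(C_5)=3$, consistent with Theorem~\ref{kdefCyc} ($n-k=3$ odd). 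But coloring $v_1,v_3$ with color $1$ and $v_2,v_4,v_5$ with color $2$ makes exactly the two edges $v_1v_3$ and $v_4v_5$ of $G$ bad, so $\phi_2(G)=2<3=\phi_2(H)$ even though $H$ is a (spanning) subgraph of $G$ and both values are nonzero.

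Your closing caveat about the convention $\phi_k=0$ was also the right instinct, and it yields an even cheaper counterexample: $C_5\subseteq K_5$ and $\phi_5(C_5)=1$ (color all five vertices alike), while the numbers of internal edges realisable by vertex partitions of $K_5$ are $0,1,2,3,4,6,10$, so no coloring of $K_5$ has exactly five bad edges and $\phi_5(K_5)=0<1$. So the inequality fails both through the zero convention and in the genuinely positive regime. The most one can salvage along the lines of your restriction argument is the monotonicity of the \emph{set} of achievable defect counts in a weaker sense, or an inequality of the form $\phi_j(H)\le\phi_k(G)$ for \emph{some} $j\le k$; as stated, the proposition needs either a corrected hypothesis or a corrected conclusion, and no proof of the literal claim can exist.
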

\begin{proposition}\label{ChroG}
A graph $G$ of order $n$ has the $k$-defect number $\phi_k(G)=1$ if and only if $G= \overline{K}_{p} \cup H,$ $|E(H)|=k$  and $p+|V(H)|= n.$
\end{proposition}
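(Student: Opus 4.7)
The plan is to reduce the statement to the elementary observation that a single-color vertex coloring forces \emph{every} edge to be monochromatic; thus $\phi_k(G)=1$ can only occur when $|E(G)|=k$ exactly, and the decomposition $G = \overline{K}_p \cup H$ is merely a way of separating the isolated vertices from the edge-carrying part.

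For the forward direction, I would start by assuming $\phi_k(G)=1$. By definition, there is a coloring $c:V(G)\to\{1\}$ that produces precisely $k$ bad edges. For any edge $uv\in E(G)$ we have $c(u)=c(v)=1$, so every edge of $G$ is bad, giving $|E(G)|=k$. Now let $p$ denote the number of isolated vertices of $G$ and let $H$ be the subgraph induced by the remaining vertices; then $G=\overline{K}_p\cup H$, the edge set of $H$ coincides with $E(G)$ so $|E(H)|=k$, and $p+|V(H)|=|V(G)|=n$, as required.

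For the converse, I would assume $G=\overline{K}_p\cup H$ with $|E(H)|=k$ and $p+|V(H)|=n$. Since $\overline{K}_p$ contributes no edges, $|E(G)|=|E(H)|=k$. Assigning color $1$ to every vertex of $G$ yields a coloring in which all $k$ edges are monochromatic, so $\phi_k(G)\leq 1$. Any coloring uses at least one color, so $\phi_k(G)\geq 1$, and equality follows.

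I don't anticipate a genuine obstacle here: the only subtlety is noticing that the decomposition in the statement is essentially a re-packaging of the condition $|E(G)|=k$, and that Proposition~\ref{theo333} guarantees $\phi_k(G;\lambda)$ is nonzero precisely when a coloring with $k$ bad edges exists, so that $\phi_k(G)=1$ is a meaningful statement in the first place.
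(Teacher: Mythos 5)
Your proof is correct. The paper states this proposition explicitly without proof (it appears in a list of ``basic properties... without proof''), so there is no argument to compare against; your reduction of the condition $G=\overline{K}_p\cup H$, $|E(H)|=k$, $p+|V(H)|=n$ to the single equation $|E(G)|=k$, together with the observation that the one-color coloring makes every edge bad, is exactly the intended elementary justification, and your handling of the degenerate convention $\phi_k(G)=0$ (ruling it out because the one-color coloring witnesses a coloring with $k$ bad edges) is the only subtlety worth flagging.
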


\section{$k$-defect number for some classes of graphs}
\label{sec3}
In this section, we present  explicit   $k$-defect  numbers for some classes of graphs, namely: trees, cycles and wheels. A straight forward method to find explicit 
 $k$-defect  number of a graph $G$ would be using it's  $k$-defect polynomial. As stated in \cite{EMB19},  it is not easy  to  find explicit expressions of the $k$-defect polynomial for a class of graphs, hence in this section, we prove the results using  the $k$-defect polynomial if it is known, otherwise we  use direct reasoning  on  coloring. 

 For the proof of the results on trees and cycles on $n$ vertices, we make use of known results on the $k$-defect polynomials in Proposition~\ref{pemb19}.

\begin{theorem}\label{kdefCyc}
Let $C_n$  be a cycle graph on $n$ vertices and $1 \leq k \leq n-2,$ then 
\begin{eqnarray*}
\phi_{k}(C_n) &=& \begin{cases}  2 &\text{ if   $n-k$ is even}\\  
 3 & \text{ if $n-k$ is odd.}\\
\end{cases}
\end{eqnarray*}

\end{theorem}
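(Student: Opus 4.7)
The plan is to apply Proposition~\ref{pemb19}(ii) directly: since $\phi_k(C_n)$ is by definition the smallest positive integer $\lambda$ at which the polynomial
\[
\phi_k(C_n;\lambda) = \binom{n}{k}\left[(\lambda-1)^{n-k} + (-1)^{n-k}(\lambda-1)\right]
\]
takes a strictly positive value, it suffices to test $\lambda = 1, 2, 3$ in turn.

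First I would observe that at $\lambda=1$ both summands vanish, so $\phi_k(C_n;1)=0$ regardless of the parity of $n-k$; hence $\phi_k(C_n)\geq 2$ whenever the defect number exists. Next, evaluating at $\lambda=2$ gives
\[
\phi_k(C_n;2) = \binom{n}{k}\bigl[1 + (-1)^{n-k}\bigr],
\]
which equals $2\binom{n}{k}>0$ when $n-k$ is even and equals $0$ when $n-k$ is odd. This settles the even case immediately, yielding $\phi_k(C_n)=2$.

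For the odd case I would evaluate at $\lambda=3$, obtaining
\[
\phi_k(C_n;3) = \binom{n}{k}\bigl[2^{n-k} - 2\bigr].
\]
The only subtlety — and really the only step requiring any thought — is verifying that this quantity is strictly positive, i.e.\ that $2^{n-k}-2>0$. This is where the hypothesis $k \leq n-2$ enters: it forces $n-k\geq 2$, so $2^{n-k}\geq 4 > 2$, and the bracket is positive. Combined with the previous step, this gives $\phi_k(C_n)=3$ when $n-k$ is odd.

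I do not anticipate any real obstacle; the entire argument is a direct evaluation of the known polynomial at three small integer values, and the role of the range restriction $1\leq k\leq n-2$ is simply to ensure that the exponent $n-k$ is at least $2$ in the awkward (odd) case so that $\lambda=3$ actually witnesses a positive count of colorings with exactly $k$ bad edges.
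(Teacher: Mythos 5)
Your proposal is correct and follows essentially the same route as the paper: evaluate the known polynomial $\phi_k(C_n;\lambda)$ from Proposition~\ref{pemb19}(ii) at small integer values of $\lambda$, noting that $\lambda=2$ yields a positive count exactly when $n-k$ is even and that $\lambda=3$ always works since $k\leq n-2$ forces $2^{n-k}\geq 4$. Your explicit check at $\lambda=1$ is a small (and welcome) addition that the paper leaves implicit.
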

\begin{proof}
The proof follows from the $k$-defect polynomial in Proposition \ref{pemb19}. It suffices to observe that
\[
\phi_k(C_n;3) = {n \choose k}(2^{n-k} + (-1)^{n-k}\cdot 2) > 0,
\] for all $1 \leq k \leq n-2$. Hence, we need at most $3$ colors to color the graph for any  $1 \leq k \leq n-2$ and we conclude $\phi_k(C_n) \leq 3$.

Meanwhile
\[
\phi_k(C_n;2) = {n \choose k}(1 + (-1)^{n-k})
\] which is $0$ if and only if $n-k$ is odd and greater than $0$ otherwise. Thus $\phi_k(C_n)=3$ if $n-k$ is odd and $\phi_k(C_n)=2$ if $n-k$ is even.  
\end{proof}

\begin{theorem}\label{kdefTre}
 Let $T_n$ be a tree on $n$ vertices and $0 \leq k \leq n-2,$ then $\phi_k(T_n) =2.$ 
\end{theorem}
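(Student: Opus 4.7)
The plan is to mirror exactly the strategy used in the proof of Theorem~\ref{kdefCyc}: evaluate the explicit formula for $\phi_{k}(T_n;\lambda)$ from Proposition~\ref{pemb19} at $\lambda = 2$ to get an upper bound, and at $\lambda = 1$ to get a lower bound, then combine.

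First, I would substitute $\lambda = 2$ into the tree formula $\phi_{k}(T_n;\lambda) = \binom{n-1}{k}\lambda(\lambda-1)^{n-1-k}$, obtaining
\[
\phi_{k}(T_n;2) \;=\; \binom{n-1}{k}\cdot 2 \cdot 1^{\,n-1-k} \;=\; 2\binom{n-1}{k},
\]
which is strictly positive for every $0 \leq k \leq n-1$. Since the $k$-defect number is the smallest $\lambda$ for which $\phi_{k}(T_n;\lambda) > 0$, this immediately gives $\phi_{k}(T_n) \leq 2$.

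Next, for the matching lower bound in the range $0 \leq k \leq n-2$, I would substitute $\lambda = 1$. A $1$-coloring assigns the same color to every vertex, so all $n-1$ edges of $T_n$ are bad; thus exactly $k$ bad edges is impossible whenever $k \neq n-1$. Formally this is visible from the formula: $\phi_{k}(T_n;1) = \binom{n-1}{k}\cdot 1 \cdot 0^{\,n-1-k} = 0$ since the exponent $n-1-k \geq 1$. Therefore $\phi_{k}(T_n) \geq 2$, and combined with the upper bound we conclude $\phi_{k}(T_n) = 2$.

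There is essentially no obstacle here; the only thing to watch is the boundary behavior of the formula, in particular ensuring the exponent $n-1-k$ is genuinely positive so that $0^{n-1-k}=0$, which is exactly guaranteed by the hypothesis $k \leq n-2$. (Note also that the case $k = n-1$ is excluded, consistent with the earlier observation that $\phi_{|E|}(G) = 1$.)
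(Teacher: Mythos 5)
Your proposal is correct and is exactly the argument the paper intends: its proof of this theorem simply says to follow the method of Theorem~\ref{kdefCyc} using Proposition~\ref{pemb19}, which is precisely your evaluation of $\phi_k(T_n;\lambda)$ at $\lambda=2$ (positive) and $\lambda=1$ (zero for $k\le n-2$). You have just written out the details the paper leaves implicit.
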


\begin{proof}
    Following a similar argument as in the proof of Theorem~\ref{kdefCyc}, and making use of the $k$-defect polynomial  in Proposition \ref{pemb19}, the result holds.
\end{proof}

\begin{lemma}\label{lem:wheelmindefectscol2}
Let $W_n$ be a wheel on $n>3$ vertices and $m=2n-2$ edges. The minimum number of bad edges possible with a $2$-coloring of $W_n$ is $\lfloor \frac{n}{2}\rfloor$.
\end{lemma}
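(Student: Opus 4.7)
The plan is to parametrize any $2$-coloring of $W_n$ by two quantities and reduce the lemma to a small optimization. Let $v_0$ be the hub and $v_1 v_2 \cdots v_{n-1}$ the rim cycle; without loss of generality $c(v_0)=1$, and set $r := |\{i \ge 1 : c(v_i) = 1\}|$. Then the $r$ spokes joining the hub to a rim vertex of color $1$ are bad, while the remaining $n-1-r$ spokes are proper.

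Next, I would analyze the rim. Walking around $v_1 v_2 \cdots v_{n-1} v_1$, the number of color changes must be even, say $2c$; since each maximal monochromatic arc contributes exactly one change at each end, the rim contains exactly $c$ arcs of each color, giving $c \le \min(r, n-1-r)$ (with $c=0$ when $r \in \{0, n-1\}$). The number of bad rim edges is therefore $(n-1)-2c$, so the total number of bad edges is
\[
r + (n-1) - 2c \;\ge\; r + (n-1) - 2\min(r, n-1-r) \;=\; \max\bigl(n-1-r,\ 3r-(n-1)\bigr).
\]
A short case analysis on the parity of $n$ shows that this lower bound, as a function of $r \in \{0,1,\ldots,n-1\}$, is minimized at $r = \lfloor(n-1)/2\rfloor$, where its value equals exactly $\lfloor n/2 \rfloor$.

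To complete the proof, I would exhibit a coloring attaining the bound. For $n$ odd, assign colors alternately $1,2,1,2,\ldots$ around the rim; since $n-1$ is even, no rim edge is bad and exactly $(n-1)/2$ spokes are bad. For $n$ even, use the pattern $1,2,1,2,\ldots,1,2,2$ along the rim; this creates a single bad rim edge while yielding $(n-2)/2$ bad spokes, for a total of $n/2$. The main obstacle is not the final optimization but rather the intermediate bookkeeping: one must justify the nontrivial constraint $c \le \min(r, n-1-r)$ relating rim runs to $r$, and then treat both parities carefully, both in the lower bound and in the matching construction.
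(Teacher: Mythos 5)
Your proof is correct, and it takes a genuinely different route from the paper's. The paper proceeds constructively: it exhibits the coloring achieving $\lfloor n/2\rfloor$ bad edges, then argues informally that no local modification (trading bad cycle edges for bad spokes or vice versa) can do better when $n$ is odd, and handles even $n$ by contracting the one forced bad rim edge to reduce to the odd case. Your argument instead parametrizes an arbitrary $2$-coloring by $r$ (the number of rim vertices sharing the hub's color) and $2c$ (the number of color changes around the rim), observes that the bad-edge count is exactly $r+(n-1)-2c$, and converts the combinatorial constraint $c\le\min(r,\,n-1-r)$ into the clean lower bound $\max\bigl(n-1-r,\ 3r-(n-1)\bigr)$, minimized at $r=\lfloor (n-1)/2\rfloor$ with value $\lfloor n/2\rfloor$. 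I checked the optimization and both constructions (alternating rim for odd $n$; the pattern with one doubled color for even $n$ giving one bad rim edge and $(n-2)/2$ bad spokes), and they are right. What your approach buys is a genuinely airtight lower bound over \emph{all} $2$-colorings at once, whereas the paper's "fewer bad edges when we color the cycle properly" step is a heuristic comparison of recoloring moves rather than an exhaustive argument; the cost is only the bookkeeping you already flag, namely justifying $c\le\min(r,n-1-r)$ (each of the $c$ monochromatic arcs of each color contains at least one vertex) and the two-parity case split, both of which go through.
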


\begin{proof}
    We consider the two cases where $n$ is odd and $n$ is even.

    Let $n$ be odd then the outer cycle is an even cycle. Let $v_1, \dots, v_{n-1}$ denote the vertices on the outer cycle, and $z$ denote the center of the wheel. We color adjacent vertices $v_1, \dots, v_{n-1}$ by alternating two colors $1$ and $2$, giving a proper coloring of the cycle, with $\frac{n-1}{2}$ vertices colored $1$ and $\frac{n-1}{2}$ vertices of color 2. Now assigning $z$ either color $1$ or color $2$ gives $\frac{n-1}{2}$ bad edges, so two colors suffice to give $\frac{n-1}{2}$ bad edges.

    To show that this is indeed the minimum number of bad edges we start by coloring $z$ with color $1$ and color the vertices on the cycle in such a way that we add the minimum number of bad edges. We can add bad edges on the cycle by coloring adjacent vertices color $2$, on the spokes (edges of the form $zv_i$) by alternately coloring vertices on the cycle with $1$ and $2$, or a combination of the two. We note that coloring adjacent vertices color $2$ on the cycle always adds a bad edge, while coloring the cycle to get bad edges on the spokes adds one bad edge half the time and zero bad edges the other half. Hence, we have fewer bad edges with two colors when we color the cycle properly. By the same reasoning this will also give the same or fewer bad edges than when we choose a combination of cycle edges and spokes as bad edges.

Let $n$ be even then the outer cycle is odd and we have at least one bad edge with a $2$-coloring. color two adjacent vertices on the cycle with color $1$ and the remaining vertices properly by alternating colors $1$ and $2$. We have $\frac{n}{2}$ vertices colored $1$ and $\frac{n-2}{2}$ vertices colored $2$. Assign $z$ color $2$ to minimise bad edges and we see that two colors suffice for $\frac{n-2}{2}+1=\frac{n}{2}$ bad edges.

To show that this is the minimum, we contract the bad edge on the cycle. This results in an odd wheel with the coloring that we showed gives the minimum number of bad edges. Adding the edge back means that we added exactly one bad edge after adding one vertex, and hence $\frac{n}{2}$ is indeed the minimum number of bad edges possible with two colors.
\end{proof}

\begin{theorem}
Let $W_n$ be  a wheel on $n>3$ vertices and $m=2n-2$ edges.
The $k$-defect number of $W_n$
\begin{eqnarray*}
\phi_k(W_n) &=& \begin{cases}0 & \text{ if $2n-3 \leq k \leq 2n-4$}\\
 1 & \text{ if $k =2n-2$ } \\
2 & \text{ if $\lfloor \frac{n}{2}\rfloor \leq k  \leq 2n-5$}\\
3  &\text{ if $1 \leq k <\lfloor  \frac{n}{2}\rfloor $}\end{cases}\\
\end{eqnarray*}
\label{thm:kdefectwheel}
\end{theorem}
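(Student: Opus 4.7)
The plan is to establish the four pieces of the piecewise definition separately. For $k=2n-2$, the one-colour assignment makes every edge bad, giving $\phi_{2n-2}(W_n)=1$ trivially. For $k\in\{2n-4,2n-3\}$ I would use that $W_n$ has edge connectivity $3$ for $n>3$ (the three edges at any rim vertex form a cut matching the minimum degree); any coloring using at least two colours therefore has at least three good edges, i.e.\ at most $2n-5$ bad edges, while the monochromatic coloring has $2n-2$, so neither value is attainable and $\phi_k(W_n)=0$.

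For $\lfloor n/2\rfloor \leq k \leq 2n-5$, the lower bound $\phi_k\geq 2$ is immediate (one colour only yields $2n-2$ bad edges). For the upper bound I would exhibit a $2$-colouring realising every $k$ in this range by combining two complementary vertex-flipping sweeps. Starting from the minimum-bad coloring of Lemma~\ref{lem:wheelmindefectscol2} (which has $\lfloor n/2\rfloor$ bad edges), I flip the isolated colour-$1$ rim vertices to colour $2$ one at a time; because each such vertex has both cycle-neighbours of colour $2$ and the hub of the opposite colour at the moment of the flip, the count changes by exactly $+1$ per flip and reaches $n-1$. Starting instead from the all-one-colour coloring (which has $2n-2$ bad edges), flipping $v_1$ drops the count by $3$, and then flipping $v_2,v_3,\ldots,v_{n-2}$ in order each drops the count by exactly $1$ (one spoke and one cycle edge become good while one cycle edge turns bad), sweeping $[n-2,2n-5]$. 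The two ranges overlap and together cover $[\lfloor n/2\rfloor, 2n-5]$.

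For $1\leq k < \lfloor n/2 \rfloor$, the lower bound $\phi_k\geq 3$ follows directly from Lemma~\ref{lem:wheelmindefectscol2}, since any $2$-coloring produces at least $\lfloor n/2\rfloor$ bad edges. For the upper bound I split on parity. When $n$ is odd, the proper $3$-colouring with hub colour $3$ and rim alternating $1,2$ has $0$ bad edges; flipping $v_1,v_3,\ldots,v_{n-2}$ (each originally colour $1$) to colour $3$ one at a time turns exactly one spoke bad per flip while keeping every rim edge good, realising every count in $\{1,2,\ldots,\lfloor n/2\rfloor\}$. When $n$ is even, I start from the $3$-colouring with hub colour $3$ and rim coloured $1,2,1,\ldots,1,3$ (one bad edge) and flip $v_3,v_5,\ldots,v_{n-3}$ from colour $1$ to colour $3$ in turn, each flip again adding one bad spoke and nothing else, sweeping $1,2,\ldots,\lfloor n/2\rfloor-1$.

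The principal technical burden lies in verifying that each flip in the four sweeps changes the bad-edge count by exactly the claimed amount. This reduces to a local check of the colours of the three neighbours of the flipped vertex immediately before the flip; the flipping orders are chosen specifically so that these three-neighbour configurations remain predictable throughout each sweep, which should make the verification short in each case.
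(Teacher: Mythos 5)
Your proposal is correct, and its skeleton matches the paper's: the monochromatic colouring handles $k=2n-2$, edge-connectivity $3$ rules out $k\in\{2n-4,2n-3\}$, Lemma~\ref{lem:wheelmindefectscol2} supplies the threshold $\lfloor n/2\rfloor$ separating the $2$-colour and $3$-colour regimes, and explicit recolouring sweeps establish attainability. The differences are in the constructive details, and they are genuine. For the zero cases you inline the cut argument (any non-monochromatic colouring leaves all edges of some colour-class cut good, hence at least $\lambda(W_n)=3$ good edges) rather than citing \cite{KM22}; note that your parenthetical justification of $\lambda(W_n)=3$ only gives the upper bound $\lambda\le 3$, whereas the argument needs $\lambda\ge 3$, i.e.\ $3$-edge-connectedness of the wheel — true and standard, but the wrong direction is what you wrote down. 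For $\lfloor n/2\rfloor\le k\le 2n-5$ the paper runs a single upward sweep from the extremal $2$-colouring through three phases (flip isolated colour-$1$ rim vertices, then restart a colour-$1$ block, then grow it); your two overlapping sweeps — upward from the Lemma's colouring to $n-1$, downward from the monochromatic colouring through $[n-2,2n-5]$ — cover the same range with simpler local configurations to verify, which is a mild improvement. For $1\le k<\lfloor n/2\rfloor$ the paper deletes bad spokes from the extremal $2$-colouring by introducing colour $3$, while you add bad spokes to a (near-)proper $3$-colouring; yours is again slightly easier to check since each flip touches only the spoke. The one place your write-up does not literally go through is the upward sweep for even $n$: in the Lemma's even-$n$ colouring the hub shares colour $2$ with the isolated rim vertices whose spokes are \emph{good}, so flipping an isolated colour-$1$ vertex to colour $2$ changes the count by $+3$, not $+1$; you must instead flip the isolated colour-$2$ rim vertices to colour $1$. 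This is a routine relabelling, and the paper itself disposes of the even case with "a similar colouring argument", so it is not a substantive gap — but it is exactly the kind of three-neighbour check your last paragraph promises, so it should be carried out.
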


\begin{proof}
We note from \cite{KM22} that $\phi_k(G)=0$ for all $|E|-\lambda < k < |E|$, where $\lambda$ is the minimum edge-cut set number of a graph $G$. Since $\lambda(W_n)=3$ we have $\phi_k(W_n)=0$ if $2n-3 \leq k \leq 2n-4$.

If $k=m=2n-2$ we have all the edges bad and that requires that we color all the vertices the same color, so $\phi_{2n-2}(W_n)=1$.

For $\lfloor \frac{n}{2}\rfloor \leq k  \leq 2n-5$ and $n$ odd, we start with the coloring from Lemma \ref{lem:wheelmindefectscol2}, with $z$ colored $1$, that gave the minimum number of bad edges on two colors. We successively re-color each vertex of color $1$ on the cycle with color $2$. Every time we re-color a vertex, we lose one bad edge (a spoke) and add two bad edges (on the cycle), effectively increasing $k$ by one. We do this until all the edges on the cycle are bad, that is $k=n-1$. Now re-color two adjacent vertices on the cycle with color $1$. We lose three bad edges on the cycle but add three bad edges (two spokes and one on the cycle). We still have $n-1$ bad edges. Now re-coloring a vertex on the cycle adjacent to one of the cycle vertices colored $1$ with color $1$, we add two bad edges (one spoke and one on cycle) and lose one bad edge on the cycle, thus increasing $k$ by one. We can continue to do this until we have one vertex colored $2$ on the cycle and $k=2n-5$. The remaining three edges are covered by the previous two cases. Using a similar coloring argument for even $n$ we get the same result.

For $1 \leq k <\lfloor  \frac{n}{2}\rfloor $ and $n$ odd, we again start with the coloring from Lemma \ref{lem:wheelmindefectscol2}. Re-color one of the vertices of color $1$ with color $3$. This reduces the number of bad edges by one. Continue doing this until there is one vertex of color $1$ left on the cycle and $k=1$. Since we know that $k =\lfloor  \frac{n}{2}\rfloor $ is the minimum number of edges we can get with two colors, the result follows. Once again, a similar re-coloring argument holds for even $n$.
\end{proof}
\begin{note}
It is interesting to observe that $\phi_0(W_n) = \chi(W_n)$ depends on $n,$ specifically, $\phi_0(W_n) = 4$ when $n$  is even and  $\phi_0(W_n) = 3$ when $n$ is odd. However, for $1\leq k < 2n-3$, $\phi_k(G)$ is constant on the given intervals, independent of the parity of $n$.
\end{note}

\section{Some general results on the $k$-defect number}
\label{sec4}
In this section, we give some general results on the $k$-defect number of graphs. As a by-product of our investigation, these general results shows the relationship between the $k$-defect number and how the $k$-defect chromatic number of a graph can be computed in terms of other graph parameters.

We know that the chromatic polynomial  $\chi (G; \lambda)$ counts the number of ways of coloring $G.$ The following Lemma is a well known fact on chromatic polynomials.
\begin{lemma}
\label{lem1}
 The chromatic polynomial of $G$ can be factorised as 
 $\chi (G; \lambda)= \lambda (\lambda -1)(\lambda -2) \cdots (\lambda -r)[Q(\lambda)]$ such that $\chi(G)=r+1$  and $Q(\lambda)$ is a polynomial without positive integer roots.   
\end{lemma}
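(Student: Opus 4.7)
The plan is to read off the integer roots of $\chi(G;\lambda)$ directly from the defining property of the chromatic number. By definition, $\chi(G)$ is the smallest non-negative integer $\lambda$ for which $\chi(G;\lambda) > 0$; setting $r = \chi(G) - 1$, this gives $\chi(G; i) = 0$ for every $i \in \{0, 1, \ldots, r\}$, while $\chi(G; m) > 0$ for every integer $m \geq r+1$.

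First, since $\chi(G;\lambda) \in \mathbb{Z}[\lambda]$ vanishes at the $r+1$ distinct integer values $0, 1, \ldots, r$, each of the pairwise coprime linear factors $(\lambda - i)$ divides $\chi(G;\lambda)$, and hence so does their product. This yields a factorization
\[
\chi(G;\lambda) \;=\; \lambda(\lambda-1)(\lambda-2)\cdots(\lambda-r)\, Q(\lambda)
\]
for some polynomial $Q(\lambda)$, with $\chi(G) = r+1$ built into the construction.

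Next, I would verify that $Q$ has no positive integer roots. For any integer $m \geq r+1$,
\[
Q(m) \;=\; \frac{\chi(G;m)}{m(m-1)(m-2)\cdots(m-r)},
\]
and both numerator and denominator are strictly positive, so $Q(m) > 0$. Thus $Q$ cannot have a positive integer root strictly greater than $r$.

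The main obstacle is how to interpret the claim at values $m \in \{1, \ldots, r\}$: if some $i \leq r$ happens to be a multiple root of $\chi(G;\lambda)$ (which occurs, for instance, at $\lambda = 0$ when $G$ is disconnected), then $Q(i)$ will also vanish. In the context of the present paper, the statement should be read as asserting that $Q$ contributes no \emph{additional} positive integer roots beyond those already exhibited in the explicit product $\lambda(\lambda-1)\cdots(\lambda-r)$, so that the factorization cleanly exposes every small-integer obstruction to proper coloring. With this reading in place, the two steps above establish the lemma.
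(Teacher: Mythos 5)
Your argument is sound, and in fact the paper offers no proof of this lemma at all --- it is introduced as ``a well known fact on chromatic polynomials'' --- so yours is the only proof on the table. The divisibility step (the polynomial vanishes at the $r+1$ distinct integers $0,1,\dots,r$, hence is divisible by the monic product $\lambda(\lambda-1)\cdots(\lambda-r)$) and the positivity step ($Q(m)=\chi(G;m)/\bigl(m(m-1)\cdots(m-r)\bigr)>0$ for every integer $m\ge r+1$, since a proper coloring with $r+1$ colors is in particular one with $m$ colors) are both correct, and together they deliver exactly what the downstream application in Theorem~\ref{th:kdefectcontr} needs: each summand $\chi(G/X_i;\lambda)$ vanishes at the integers $0,\dots,r_i$ and is strictly positive at every integer $\ge r_i+1$, so the least positive integer at which the sum is positive is $1+\min_i r_i$. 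Your caveat about the literal wording is also a genuine catch, and it is worth making concrete: for the path $P_3$ one has $\chi(P_3;\lambda)=\lambda(\lambda-1)^2$ with $\chi(P_3)=2$, so $r=1$ and $Q(\lambda)=\lambda-1$, which \emph{does} have the positive integer root $1$. Hence ``$Q(\lambda)$ has no positive integer roots'' is false as stated whenever $\chi(G;\lambda)$ has a repeated root at some $i\in\{1,\dots,r\}$; the correct, intended, and sufficient assertion is that $Q$ has no integer roots exceeding $r$, equivalently that $\chi(G;m)>0$ for every integer $m\ge r+1$. With that reading, your two steps constitute a complete proof.
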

\begin{theorem}The $k$-defect number $\phi_k(G) = \chi(G/X)$, where $\chi(G/X)$ is the  minimum chromatic number  over all minors of $G$ obtained by contracting closed sets of size $k.$
\label{th:kdefectcontr}
\end{theorem}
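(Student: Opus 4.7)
The plan is to deduce the theorem directly from the sum formula for the $k$-defect polynomial (Theorem~\ref{prop2}) combined with the defining property that $\phi_k(G)$ is the least positive integer $\lambda$ at which the polynomial $\phi_k(G;\lambda)$ becomes positive.

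First, I would invoke Theorem~\ref{prop2} to write
\[
\phi_k(G;\lambda) \;=\; \sum_{\substack{X \in L(G) \\ |X| = k}} \chi(G/X;\lambda),
\]
valid whenever $G$ has at least one closed set of size $k$ (otherwise $\phi_k(G;\lambda) \equiv 0$ and the $k$-defect number is undefined, matching the convention). For every positive integer $\lambda$, each summand $\chi(G/X;\lambda)$ is a nonnegative integer, since the chromatic polynomial of a graph counts its proper $\lambda$-colorings. Consequently the sum on the right-hand side is positive if and only if at least one summand is positive.

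Next I would translate positivity back into the chromatic number: for a fixed minor $G/X$, we have $\chi(G/X;\lambda) > 0$ precisely when $\lambda \geq \chi(G/X)$. Therefore,
\[
\phi_k(G;\lambda) > 0 \quad \Longleftrightarrow \quad \lambda \geq \chi(G/X) \ \text{ for some closed set } X \text{ with } |X|=k,
\]
which is in turn equivalent to $\lambda \geq \min_{X \in L(G),\,|X|=k} \chi(G/X)$. Taking the smallest such $\lambda$ and recalling that $\phi_k(G)$ is by definition the least integer at which $\phi_k(G;\lambda)$ is positive, I conclude
\[
\phi_k(G) \;=\; \min_{\substack{X \in L(G) \\ |X|=k}} \chi(G/X),
\]
which is exactly the claim.

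There is no real combinatorial obstacle here; the only subtlety is making sure that the sum formula and the nonnegativity argument are applied correctly, i.e., that one cannot have cancellation of positive terms in the sum (indeed there cannot, since each summand is a count of colorings and thus $\geq 0$ for every positive integer $\lambda$). A second minor point to record is the degenerate case in which no closed set of size $k$ exists, so that $L(G)$ contributes no summand; this is exactly the case in which $\phi_k(G;\lambda) = 0$ and the statement of the theorem applies vacuously.
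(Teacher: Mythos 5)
Your proof is correct, and it shares its starting point with the paper --- both arguments begin from the identity $\phi_k(G;\lambda)=\sum_{X\in L(G),\,|X|=k}\chi(G/X;\lambda)$ of Theorem~\ref{prop2} --- but the key step is genuinely different. The paper proceeds algebraically: it invokes Lemma~\ref{lem1} to factor each summand as $\lambda(\lambda-1)\cdots(\lambda-r_i)Q_i(\lambda)$ and then asserts that the whole sum can be written as $\lambda(\lambda-1)\cdots(\lambda-r)$ times a remaining factor, with $r$ the smallest $r_i$. As literally written that factorization step is shaky (a sum of polynomials in factored form does not factor term-by-term like that, and one still must rule out the possibility that the surviving terms cancel or vanish at $\lambda=r+1$), and the paper's proof in fact stops without drawing the final conclusion. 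Your route replaces that algebra with the combinatorial observation that at every positive integer $\lambda$ each summand $\chi(G/X;\lambda)$ is a \emph{count} of colorings, hence nonnegative, so the sum is positive exactly when some summand is, i.e.\ exactly when $\lambda\ge\min_X\chi(G/X)$. This is cleaner, handles the no-cancellation issue explicitly, and immediately yields $\phi_k(G)=\min_X\chi(G/X)$; you also correctly dispose of the degenerate case where no closed set of size $k$ exists. The only point you might add for completeness is a remark that contracting a \emph{closed} set $X$ produces no loops (any edge whose endpoints are identified by the contraction would lie in the closure of $X$), so each $\chi(G/X;\lambda)$ is a genuine chromatic polynomial with a well-defined chromatic number.
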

\begin{proof} 
By Theorem~\ref{prop2} we have,
$\displaystyle \phi_{k}(G;\lambda) = \sum_{X\in L(G),\vert X \vert = k} \chi (G/X; \lambda)$ if $G$ has at least one closed set  of size $k.$ Thus,  if $L(G) = \{X_1, X_2, \cdots , X_n\}$ then we can rewrite 
\begin{eqnarray*}
   \phi_{k}(G;\lambda) &=& \sum_{X_i} \chi(G/X_i; \lambda).\\
   &=&  \sum\lambda(\lambda -1)(\lambda -2) \cdots (\lambda -r_i)  Q_i(\lambda) \text{ by Lemma~\ref{lem1} }.\\
\end{eqnarray*}

Hence, we can rewrite
\begin{eqnarray*}
   \phi_{k}(G;\lambda) 
   &=& \lambda(\lambda -1)(\lambda -2) \cdots (\lambda -r) T_i(\lambda) Q_i(\lambda)
\end{eqnarray*}
where $r$ is the smallest $r_i$ and $T_i(\lambda) = (\lambda -r_i+1) \cdots (\lambda -r_i+t)$ for some $t \geq 0.$

\end{proof}

\begin{corollary}
Let $H_1, H_2, \cdots, H_q$ be closed sets of $G$ of size $k.$ Then $\phi_k(G)= \chi(G/H)$ where  $\omega (G/H_i)$ is the minimum  over $H_1, H_2, \cdots, H_q .$ 
\end{corollary}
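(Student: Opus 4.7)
The plan is to deduce the statement directly from Theorem~\ref{th:kdefectcontr}, which already expresses $\phi_k(G)$ as the minimum of $\chi(G/H_i)$ over the closed sets $H_1,\ldots,H_q$ of size $k$. The additional content of the corollary is that this minimum can be located via the (often easier to compute) clique number: among the closed sets $H_i$, select $H$ for which $\omega(G/H)$ is smallest, and then identify $\phi_k(G)$ with $\chi(G/H)$.

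First, I would invoke Theorem~\ref{th:kdefectcontr} to write $\phi_k(G)=\min_{1\leq i\leq q}\chi(G/H_i)$. Combining this with the universal inequality $\chi(G/H_i)\geq \omega(G/H_i)$ gives $\phi_k(G)\geq \min_i\omega(G/H_i)=\omega(G/H)$. In the other direction, since $H$ is itself one of the $H_i$, Theorem~\ref{th:kdefectcontr} already implies $\phi_k(G)\leq \chi(G/H)$. Hence the two-sided estimate $\omega(G/H)\leq \phi_k(G)\leq \chi(G/H)$ holds, and it remains to collapse this chain to an equality.

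The main obstacle is precisely this collapse: one must argue that the closed set $H$ that minimises $\omega(G/H_i)$ is also the one realising $\min_i\chi(G/H_i)$. My plan is to exploit the factorisation from Lemma~\ref{lem1}, which writes each $\chi(G/H_i;\lambda)$ as $\lambda(\lambda-1)\cdots(\lambda-r_i)\,Q_i(\lambda)$ with $\chi(G/H_i)=r_i+1$, so that minimising $\chi(G/H_i)$ is equivalent to minimising $r_i+1$. Since the linear factors $\lambda,(\lambda-1),\ldots,(\lambda-r_i)$ are forced by the presence of a clique of size $r_i+1$ in $G/H_i$, the index of the last obligatory linear factor is governed by $\omega(G/H_i)$. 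I would therefore show that the $H$ minimising $\omega(G/H_i)$ forces the smallest such $r_i+1$, giving $\chi(G/H)=\omega(G/H)$ and hence $\phi_k(G)=\chi(G/H)$, completing the corollary.
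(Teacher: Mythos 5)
Your first two steps --- invoking Theorem~\ref{th:kdefectcontr} to get $\phi_k(G)=\min_i\chi(G/H_i)$ and Proposition~\ref{Chrom}(iii) to get the sandwich $\omega(G/H)\leq\phi_k(G)\leq\chi(G/H)$ --- are exactly the two ingredients the paper itself cites, so up to that point you have reconstructed the intended argument. You are also right, and commendably explicit, that the real content is collapsing this chain: one must show that the $H_i$ minimising $\omega(G/H_i)$ is also the one minimising $\chi(G/H_i)$. The paper's one-line proof does not address this either.

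The problem is that your proposed way of closing the gap does not work. The linear factors $\lambda(\lambda-1)\cdots(\lambda-r_i)$ in Lemma~\ref{lem1} are \emph{not} forced by a clique of size $r_i+1$; they are forced by the vanishing of $\chi(G/H_i;\lambda)$ at $\lambda=0,1,\ldots,\chi(G/H_i)-1$, i.e.\ $r_i+1=\chi(G/H_i)$ is governed by the chromatic number itself, not by $\omega(G/H_i)$. For instance $\chi(C_5;\lambda)=\lambda(\lambda-1)(\lambda-2)(\lambda^2-2\lambda+2)$, so the factor $(\lambda-2)$ appears even though $\omega(C_5)=2$; more dramatically, the Gr\"otzsch graph has $\omega=2$ and $\chi=4$. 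Hence your intended conclusion $\chi(G/H)=\omega(G/H)$ is false in general, and without it there is no reason the $\omega$-minimiser and the $\chi$-minimiser among the $G/H_i$ coincide: a priori one contraction could have small clique number but large chromatic number while another has larger clique number but smaller chromatic number. To make the corollary true as stated one would need an additional structural fact about minors of $G$ obtained by contracting closed sets (e.g.\ that they are perfect, or at least that $\chi=\omega$ holds for the relevant minimisers); neither you nor the paper supplies such a fact, so the step remains a genuine gap.
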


\begin{proof}
	This follows directly from Theorem \ref{th:kdefectcontr} and Proposition \ref{Chrom} (iii).
\end{proof}

\begin{corollary}The $k$-defect number of $K_n$ is $\chi(K_n /H)$, where $H$ is a closed set of size $k$ of maximum order.
\end{corollary}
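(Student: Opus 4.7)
The plan is to specialize Theorem~\ref{th:kdefectcontr} to $G=K_n$ and work out exactly what the minimization over closed sets of size $k$ says for the complete graph. By that theorem,
\[
\phi_k(K_n)\;=\;\min\bigl\{\chi(K_n/X)\;:\;X\in L(K_n),\;|X|=k\bigr\},
\]
so the task reduces to identifying which closed set $H$ of size $k$ minimizes $\chi(K_n/H)$.

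First I would recall the matroid-theoretic description of the flats of $K_n$. The closed sets of the cycle matroid of $K_n$ correspond bijectively to partitions of $V(K_n)$: a partition $\{B_1,\ldots,B_p\}$ gives the flat $\bigcup_{i=1}^{p} E(K_n[B_i])$, consisting of all edges having both endpoints in the same block. Such a flat has size $\sum_{i=1}^{p}\binom{|B_i|}{2}$, which must equal $k$, and its rank equals $n-p$.

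Next I would observe that contracting such a flat $X$ collapses each block $B_i$ to a single vertex, and since $K_n$ contains every edge between distinct blocks, the underlying simple graph $K_n/X$ is isomorphic to $K_p$. Consequently $\chi(K_n/X)=p$, and minimizing this over closed sets of size $k$ is equivalent to minimizing $p$, equivalently to maximizing the rank $n-p$. This is precisely the property of being a closed set $H$ of maximum order, so Theorem~\ref{th:kdefectcontr} gives $\phi_k(K_n)=\chi(K_n/H)$.

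The only content beyond a direct invocation of the preceding theorem is the matroid description of flats of $K_n$ and the identification $K_n/X\cong K_p$; neither is difficult, but they are the one place the argument could get muddled if one is not careful about parallel edges created by contraction (which are simplified away). Everything else follows by routine bookkeeping, and one could also deduce the statement from the previous corollary by noting that $\omega(K_p)=\chi(K_p)=p$, so minimizing the clique number of $K_n/H$ coincides with minimizing its chromatic number.
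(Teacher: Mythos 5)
Your argument is correct and follows essentially the same route as the paper: both specialize the preceding theorem/corollary to $K_n$ using the fact that flats of size $k$ in $K_n$ are disjoint unions of complete subgraphs (equivalently, partitions of the vertex set), so that contracting a flat with the fewest resulting blocks minimizes $\chi(K_n/X)=\omega(K_n/X)=p$. Your write-up merely makes explicit the bookkeeping (size $\sum\binom{|B_i|}{2}=k$, rank $n-p$, $K_n/X\cong K_p$) that the paper leaves implicit by citing \cite{KM22}.
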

\begin{proof}
	Closed sets of size $k$ in $K_n$ are disjoint unions of complete graphs, see \cite{KM22}. Hence, contracting a closed set $H$ of size $k$ of maximum order will give $\omega(G/H)$ where $\omega (G/H_i)$ is the minimum  over $H_1, H_2, \cdots, H_q .$
\end{proof}

Recall that a set of vertices is \emph{independent} if none of the vertices in the set are adjacent in the graph.

\begin{theorem}
Let $G$ be a bipartite graph of size $m$, then $\phi_k(G)=2$ for all $0\leq k \leq (m-1)$ if and only if there exists an independent subset $I\subseteq V(G)$ of vertices such that
\[\sum_{v\in I} \textrm{deg}(v)=k,
\]
for all $1\leq k \leq (m-1).$
\label{th:k2iff}
\end{theorem}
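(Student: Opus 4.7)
The plan is to establish the biconditional in two stages, with the $(\Leftarrow)$ direction being a direct construction and the $(\Rightarrow)$ direction requiring a more delicate argument.

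For $(\Leftarrow)$: Since $G$ is bipartite with at least one edge, $\phi_0(G) = \chi(G) = 2$. For $k \in [1, m-1]$, note that $m-k \in [1, m-1]$ as well, so the hypothesis supplies an independent set $I \subseteq V(G)$ with $\sum_{v \in I} \deg(v) = m-k$. Coloring $I$ with one color and $V(G) \setminus I$ with the other leaves no bad edges inside $I$ (by independence), so every bad edge sits inside $G[V\setminus I]$; and since each edge incident to $I$ has exactly one endpoint in $I$, the count of edges \emph{not} incident to $I$ is $|E(G[V\setminus I])| = m - \sum_{v \in I}\deg(v) = k$. This gives $\phi_k(G) \leq 2$, and $\phi_k(G) \geq 2$ is immediate for $k < m$ because a one-color coloring produces $m \neq k$ bad edges.

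For $(\Rightarrow)$: Fix $k \in [1, m-1]$. By hypothesis, $\phi_{m-k}(G) = 2$, so there exists a 2-coloring with color classes $V_1, V_2$ achieving exactly $m-k$ bad edges, which means the cut $|E(V_1, V_2)|$ equals $k$. If either $V_1$ or $V_2$ is independent, say $V_1$, then $\sum_{v \in V_1}\deg(v) = |E(V_1, V_2)| = k$, and setting $I := V_1$ completes the argument. Hence the problem reduces to showing that under the full hypothesis, every bad-edge count in $[1, m-1]$ is realized by some \emph{nice} 2-coloring, meaning one with at least one independent color class.

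The technical heart of the argument is this nice-coloring reduction. My plan is a perturbation argument: decomposing $V_1 = X \sqcup Y$ with $X \subseteq A$ and $Y \subseteq B$ (bipartition $V = A \cup B$), the internal edges of $V_1$ form the set $E(X, Y)$, and niceness asks $E(X, Y) = 0$ or $E(A\setminus X, B\setminus Y) = 0$. Starting from any 2-coloring with bad-edge count $m-k$ in which neither class is independent, I move a suitably chosen vertex from one class to the other so as to eliminate an internal edge while preserving the total bad count; the availability of such a vertex is guaranteed by combining the bipartite structure with the fact that the hypothesis provides 2-colorings at every neighboring bad count $k-1, k, k+1$ in $[0, m-1]$. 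The main obstacle is ensuring that a bad-count-preserving local swap is always available, which requires a careful case analysis on the degree balance of the moved vertex between $V_1$ and $V_2$; it is here that the full strength of the hypothesis (not just at $m-k$) is needed, since in general bipartite graphs (e.g.\ $K_{3,3}$) some bad-edge counts are only realizable by non-nice colorings.
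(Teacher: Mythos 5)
Your $(\Leftarrow)$ direction is complete and correct: it is essentially the paper's construction seen through complementation. The paper starts from a proper $2$-coloring and flips the colors of an independent set $I$ with $\sum_{v\in I}\deg(v)=k$, so that the bad edges are exactly the $k$ edges incident to $I$; you instead take the set with degree sum $m-k$, give it one color and its complement the other, so that the bad edges are exactly the $k$ edges not incident to $I$. Both are valid, and your version of the trivial lower bound $\phi_k(G)\geq 2$ for $k<m$ is fine.

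The $(\Rightarrow)$ direction, however, has a genuine gap. You correctly reduce the problem to showing that, under the full hypothesis, every bad-edge count in $[1,m-1]$ is attained by a \emph{nice} $2$-coloring (one with an independent color class) --- but this reduction is essentially a restatement of the theorem, not progress toward it, and the perturbation argument you propose to close it is only a plan: you yourself flag that ``the main obstacle is ensuring that a bad-count-preserving local swap is always available'' and you never exhibit such a swap or prove its existence. Worse, your own example shows a purely local argument cannot work: in $K_{3,3}$ the bad counts $4$ and $5$ are attained \emph{only} by colorings in which both classes contain an internal edge, so no sequence of count-preserving moves starting from such a coloring can reach a nice one; any correct proof must genuinely import information from the hypothesis at \emph{other} values of $k$, and you give no concrete mechanism for doing so. (For what it is worth, the paper's own argument for this direction --- recolor two adjacent vertices $u,v$ with $\deg(u)+\deg(v)=k$ and derive a ``contradiction'' --- is also far from rigorous, so you are in good company; but as written your proposal does not prove the forward implication.)
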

\begin{proof}
Since $G$ is bipartite $\phi_0(G)=2$. Also, we have $\phi_m(G)=1$ for all graphs.

$(\implies)$ We use two colors to color $G$ properly. Since $G$ is bipartite, changing the color of a vertex $v$ of color $1$ to color $2$ means that all edges incident on $v$ become bad edges. It follows that if we have an independent subset of vertices $v$ such that $\sum \textrm{deg}(v)=k$ we will have $\phi_k(G)=2$.

$(\Longleftarrow)$ Suppose there is no such $I$, then it its possible to color adjacent vertices resulting in $k$ bad edges. Without loss of generality, let $u$ and $v$ be adjacent such that $\textrm{deg}(u)+\textrm{deg}(v)=k$. color $u$ the same as $v$. We have $\textrm{deg}(u)$ bad edges. But edges incident on $v$, not incident on $u$, can only become bad if we re-color a third vertex $w$ adding $\textrm{deg}(w)$ bad edges, leading to a contradiction.
\end{proof}

\begin{example}
	Consider the induced $K_{3,4}$ in Figure \ref{fig:egk=2}, the subgraph with solid edges and partite sets $v_i$ and $u_i$ colored white and black respectively. Clearly we can have three, six, nine or twelve bad edges by changing the color of the required number of white vertices. Similarly, we can have four, eight or twelve bad edges by changing the color of the required number of black vertices. We cannot have one, two or five bad edges since we do not have vertices of degree one or two. However, we can also not have seven, ten or eleven bad edges, even though we do have vertex degrees that add up to these numbers. That is because every vertex of degree four is adjacent to every vertex of degree three. If we consider the entire graph, however, it is possible to have $k$ bad edges for all $1\leq k\leq  14$ using the two colors, since we have independent vertex sets with degree sums for all the required $k$.
\end{example}

\begin{figure}[!ht]
	\centering
	\includegraphics[scale=1.5]{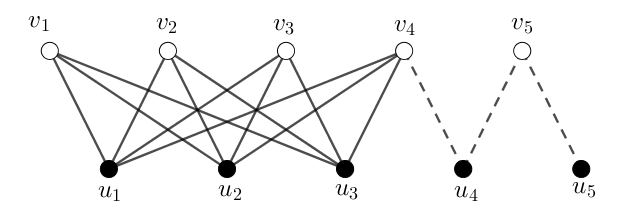}
		\caption{}
		\label{fig:egk=2}
\end{figure}

\section{Conclusion and Questions}
\label{sec5}

\begin{sloppypar}
It is interesting to note that, given enough colors, it is always possible to color a graph properly, that is $\phi_0(G)$ is defined for all graphs. However, for $k\geq 1$ we have cases where $\phi_k(G)=0$, no matter how many colors are available. We know for example from Theorem \ref{th:k2iff} what these values for $k$ would be for bipartite graphs. We also know that for complete graphs of order $n$ and  for integers $p$ and $k,$ ${\displaystyle 1\leq p \leq \lfloor \frac{-1+\sqrt{8n-15}}{2} \rfloor}$, $\phi_k(K_n)=0$ on the intervals  
$\displaystyle\binom{n-p}{2}+\displaystyle\binom{p}{2} < k < \displaystyle\binom{n-p+1}{2},$ see \cite{KM22}. On the other hand, for trees, $\phi_k(T_n)\neq 0$ for any $k$. Is it possible to determine other graph parameters that cause the number of these zeroes to increase?
\end{sloppypar}

When we consider all the possible values of $k$  for the $k$-defect number, it  is complex to generalize the results.  Hence an approach of studying  each value of $k$ separately seems reasonable in order to get a better understanding of the $k$-defect number.  Following the theory on chromatic numbers of a graph we pose further questions: can we find upper and lower bounds for the $k$-defect numbers for different values of $k$? The chromatic number of a graph is a widely used graph parameter, can some of the $k$-defect numbers get the same status?

\section*{Acknowledgments}
Special thanks to  Paul Horn (University of Denver, USA)  for valuable suggestions and discussions in improving this paper.

This research is supported in part by NGA(MaSS) grant number UCDP-712,
CoE(MaSS) grant number 2022-044-GRA-Workshop,
NITheCS from the Mathematical Structures Research grant,
UJ GES 4IR.


\begin{thebibliography}{99}
\bibitem{ANG} P. Angelini, M. Bekos, F. De Luca, W. Didimo, M. Kaufmann, S. Kobourov, F. Montecchiani, C. Raftopoulou, V. Roselli, and A. Symvonis, Vertex-coloring with defects. \textit{Journal of Graph Algorithms and Applications}, 21(3), pp.313-340, 2017.

\bibitem{BP21} S. Cambie, W. Cames van Batenburg, E. Davies, and R. J. Kang, Packing list-colorings, Random Structures \& Algorithms 64 (2024), no. 1, 62–93. \\ 

\bibitem{CB20} L. J. Cowen, R. H. Cowen, and D. R. Woodall,
Defective colorings of graphs in surfaces: partitions into subgraphs of bounded valency, J. Graph Theory 10 (1986), 187–195. \\ 

\bibitem{INT} C.J. Casselgren, and P.A. Petrosyan, Improper interval edge colorings of graphs. \textit{Discrete Applied Mathematics}, 305, pp.164-178, 2021.

\bibitem{GCPZ} G. Chartrand and P. Zhang. Chromatic graph theory. In K.H. Rosen, editor, Discrete Mathematics and Its Applications,  147--203 Chapman \& Hall, New York, USA,2009. \\

\bibitem{HC69}H.H. Crapo, The  Tutte polynomial, Aequationes Math.,3 (1969) 211-229.\\

\bibitem{CWJ} L.J. Cowen, W. Goddard, and C. E. Jesurum. Coloring with defect. In SODA, vol. 97, pp. 548-557. 1997.

\bibitem{FH}F. Harary, Graph Theory, Addison-Wesley, Reading MA, (1969).\\




\bibitem{TRB11} T.R. Jensen and B. Toft, Graph Coloring Problems. \textit{Wiley}, 2011.\\

\bibitem{KM22} C. Kriel and E. Mphako-Banda, Sizes of flats of cycle matroids of complete graphs, \textit{Revista Colombiana de Matem\'aticas} \textbf{56(1)} (2022), 63--75.\\

\bibitem{EMB19} E. Mphako-Banda, An introduction to the $k$-defect polynomials, \textit{Quaestiones Mathematicae} \textbf{42(2)} (2019), 207--216.\\

\bibitem{HM17}  P. Ossona de Mendez, S. Oum, and D. R. Wood,
Defective coloring of graphs excluding a minor,
J. Combin. Theory Ser. B 114 (2015), 1–48. \\ 

\bibitem{RR68} R.C. Read, An introduction to chromatic polynomials, \textit{Journal of Combinatorial Theory}, vol. 4, 52-71, 1968.\\

\end{thebibliography}
 \end{document}